\newcommand{\I}{\mathrm{I}}
\newcommand\II{\mathrm{I}\!\mathrm{I}}
\newcommand\III{\mathrm{I}\!\mathrm{I}\!\mathrm{I}}
\newcommand{\M}{\overline{M}}
\newtheorem{theorem}{Theorem}
\newtheorem{lem}{Lemma}
\newtheorem{prop}{Proposition}
\newtheorem{defi}{Definition}
\newtheorem{cor}{Corollary}
\begin{document}
\title[Local convexity of renormalized volume for cusped manifolds]{\textbf{Local convexity of renormalized volume for rank-1 cusped manifolds}}
\author{Franco Vargas Pallete}
\thanks{Research partially supported by NSF grant DMS-1406301}
\address{Department of Mathematics  \\
 University of California at Berkeley \\
745 Evans Hall \\
Berkeley, CA 94720-3860 \\
U.S.A.}
\email{franco@math.berkeley.edu}

\begin{abstract}
We study the critical points of the renormalized volume for acylindrical geometrically finite hyperbolic 3-manifolds that include rank-1 cusps, and show that the renormalized volume is locally convex around these critical points. We give a modified definition of the renormalized volume that is additive under gluing, and study some local properties.
\end{abstract}
\maketitle

\section{Introduction}
Renormalized volume is a quantity that gives a notion of volume for hyperbolic manifolds which have infinite volume under the classical definition. Its study for convex co-compact hyperbolic 3-manifolds can be found in \cite{KrasnovSchlenker}, while the geometrically finite case which includes rank 1-cusps has been developed in \cite{MoroianuGuillarmouRochon}.

The article is organized as follows: sections (\ref{sec:Localparam}) and (\ref{sec:review}) give a review of parametrization of hyperbolic metrics and renormalized volume for convex co-compact manifolds. Section (\ref{sec:onmaximality}) gives a way to complete a result of \cite{KrasnovSchlenker} using Ricci flow. Section (\ref{sec:localconvexity}) gives a proof for the convex co-compact case by showing that the Hessian of the renormalized volume is positive definite, while section (\ref{sec:remarks}) shows how to extend this proof when rank-1 cusps are taken into account. Finally, in section (\ref{sec:corrected}) we define the corrected renormalized volume, which is additive under gluing, and we prove convexity at one of its critical points.

Moroianu \cite{Moroianu} has proved independently that the Hessian is positive definite at the critical points for the convex co-compact case, by the use of minimal surfaces. Our method relies on computing the Hessian for quasi-Fuchsian manifolds, and then using the skinning map to compute it for acylindrical manifolds.

\textbf{Acknowledgements:} I would like to thank Ian Agol for introducing me to the topic and for his guidance during this project. The definition of corrected renormalized volume is due to him. I would also like to thank David Dumas, Richard Canary and Martin Bridgeman for their helpful comments, as well to the anonymous referee.

\section{Local parametrization of hyperbolic metrics}\label{sec:Localparam}

Let $R$ be a Riemann surface of genus $g$ with $n$ punctures, $\mathcal{T}$ its Teichmuller space, $\mathcal{B}$ its space of Beltrami differentials and $Q$ its space of quadratic holomorphic differentials. There is a nice description of these two last spaces in terms of complex-valued functions in the following way (it can be found, for example, in \cite{Gardiner}):

Take the covering map $\mathbb{H}^2 \rightarrow R$ and denote by $\Gamma\subseteq {\rm PSL}(2,\mathbb{R})$ the group of deck transformations. Then elements of $\mathcal{B}$ are represented in $\mathbb{H}^2$ by measurable $L^\infty$ functions $\mu$ such that
\begin{equation}\label{eq:beltrami_invariance}
	\mu(A(z)) \overline{A'(z)} = \mu(z) A'(z), \forall A\in\Gamma.
\end{equation}
Similarly, elements of $Q$ are represented in $\mathbb{H}^2$ by holomorphic functions $\phi$ such that
\begin{equation}
	\phi(A(z)) A'(z)^2 = \phi(z),\forall A\in\Gamma
\end{equation}
and $\iint_{R(\Gamma)} \vert\phi(z)\vert dx\wedge dy < \infty$, where $R(\Gamma)$ is any fundamental domain for $\Gamma$ in $\mathbb{H}^2$.
Observe that the hyperbolic metric on $\mathbb{H}^2$, $\frac{1}{y^2}dzd\overline{z}$, corresponds to $\rho(z)=\frac{1}{y^2}$ which has the property
\begin{equation}
	\rho(A(z)) A'(z)\overline{A'(z)} = \rho(z), \forall A\in {\rm PSL}(2,\mathbb{R}),
\end{equation}
and hence $\frac{\overline{\phi}}{2\rho}$ satisfies (\ref{eq:beltrami_invariance}). Because $\iint_{R(\Gamma)} \vert\phi(z)\vert dx\wedge dy < \infty$, $\frac{\overline{\phi}}{2\rho}$ is in $L^\infty$, and then we have a map $Q \rightarrow \mathcal{B}, \phi \mapsto \frac{\overline{\phi}}{2\rho}$.

Thanks to the theory of the Beltrami equation (\cite{Ahlfors}, \cite{AhlforsBers}, \cite{LehtoVirtanen}), we can find a solution of $f_{\overline{z}}(z)= \mu(z)f_z(z)$ for $\Vert\mu\Vert_\infty <1$ by a unique quasiconformal self-mapping $f^\mu$ of $\mathbb{H}^2$ which extends continuously to $\partial\mathbb{H}^2$, fixes $0,1,\infty$ and depends analytically on $\mu$ ($f$ solves the Beltrami equation in the distributional sense). With this we have a map from the unit ball of $\mathcal{B}$ to $\mathcal{T}$, and by Teichm\"uller theory, the correspondence that sends $\phi \mapsto\frac{\overline{\phi}}{2\rho}$ and then to the solution of the Beltrami equation for $\frac{\overline{\phi}}{2\rho}$ defines a local homeomorphism between a neighbourhood of $0$ in $Q$ to a neighbourhood of $\mathcal{T}$.

Now, the solution to the Beltrami equation for $\mu= \frac{\overline{\phi}}{2\rho}$, $(f^\mu)^*(\rho dzd\overline{z})$ is a local parametrization of hyperbolic metrics defined on the same space. We would like to compute the variation of this family of metrics at the origin. Because $f^{t\mu}$ satisfies the Beltrami equation for $t\mu$, we have
\begin{equation}
	(f^{t\mu})^*(\rho dzd\overline{z})(z) = \rho(f^{t\mu(z)})\vert f^{t\mu}_z\vert^2 \vert dz + t\mu(z)d\overline{z}\vert^2.
\end{equation}
Hence (here is implicit the analytic dependence of $f^\mu$ with respect to $\mu$)
\begin{equation}
	\frac{\partial}{\partial t}\bigg|_{t=0} (f^{t\mu})^*(\rho dzd\overline{z})(z) = \rho(z)\mu(z)d\overline{z}^2 + \rho\overline{\mu(z)}dz^2 + Edzd\overline{z},
\end{equation}
where $E$ groups the terms of the derivative that go together with $dzd\overline{z}$. We can then replace $\mu= \frac{\overline{\phi}}{2\rho}$ to obtain
\begin{equation}\label{eq:metricvariation}
	\frac{\partial}{\partial t}\bigg|_{t=0} (f^{t\mu})^*(\rho dzd\overline{z})(z) =	{\rm Re}(\phi dz^2) + Edzd\overline{z}.
\end{equation}

Define $RQ$ as the space of real parts of quadratic holomorphic differentials. Since taking the real part defines an isomorphism $Q\rightarrow RQ$, we have a local homeomorphism from a neighbourhood of $0$ in $RQ$ to a neighbourhood of $\mathcal{T}$. Moreover, if we define $\I_v$ to be the hyperbolic metric $(f^\mu)^*(\rho dzd\overline{z})$, where $\mu= \frac{\overline{\phi}}{2\rho}$ and ${\rm Re}(\phi dz^2) = v$, then (\ref{eq:metricvariation}) implies that at $0$
\begin{equation}
	D\I_v = v + Edzd\overline{z}.
\end{equation}
Using the hyperbolic metric of $R$ to define an inner product for tensors, $D\I_v$ projects orthogonally to $v$ (because $dz^2, d\overline{z}^2$ are pointwise orthogonal to $dzd\overline{z}$). In particular, for $v,w\in RQ_c$ we have
\begin{equation}\label{eq:orthogonalprojection}
	\langle D\I_c(v), w\rangle = \langle v, w\rangle.
\end{equation}
Observe finally that if we had chosen $R$ with the same hyperbolic metric but with opposite orientation, the space $RQ$ coincides with the one defined for the original Riemann surface structure.

\section{Review of renormalized volume}\label{sec:review}

Given a convex co-compact hyperbolic 3-manifold $(M,g)$, Krasnov and Schlenker \cite{KrasnovSchlenker} defined its renormalized volume and calculated its first variation from the $W$-volume of a compact submanifold $N$ (\cite{KrasnovSchlenker} Definition 3.1) as
\begin{equation}\label{Wvol}
	W(M,N) = V(N) - \frac{1}{4}\int_{\partial N}Hda,
\end{equation}
where $da$ is the area form of the induced metric.

Expanding on the notation used in \cite{KrasnovSchlenker}, denote by $\I$ the metric induced on $\partial N$, $\II$ its second fundamental form (so $\II(x,y) = \I(x,By)$, where $B$ is the shape operator) and $\III(x,y) = \I(Bx,By)$ its third fundamental form.

If we further assume that $N$ has convex boundary and that the normal exponential map (pointing towards the exterior of $\partial N$) defines a family of equidistant surfaces $\lbrace S_r\rbrace$ that exhaust the complement of $N$ ($S_0= \partial N$), then the $W$-volume of $N_r$ (points on the interior of $S_r$) satisfies (\cite{Schlenker13} Lemma 3.6)
\begin{equation}\label{Wr}
	W(N_r) = W(N) - \pi r \chi(\partial N).
\end{equation}
Also, as observed in (\cite{Schlenker13} Definition 3.2, Proposition 3.3), $\I^* = 4\displaystyle{\lim_{r\rightarrow\infty}e^{-2r}\I_r}$ (where $\I_r$ is the metric induced on $S_r$, which is identified with $S$ by the normal exponential map) exists and lies in the conformal class of the boundary. The analogous re-scaled limits for $\II, \III, B$ also exist and are denoted by $\II^*, \III^*, B^*$. The reason to multiply by $4$ is so $\I^*= g|_N$ in the case when $N$ is a totally geodesic surface.

For the case of convex co-compact manifolds, any metric at infinity that belongs to the conformal class given by the hyperbolic structure can be obtained as the rescaled limit of the induced metrics of some family of equidistant surfaces. Theorem 5.8 of \cite{KrasnovSchlenker} describes this by the use of Epstein surfaces (as stablished in \cite{Epstein}), which in turn allows us to define
\begin{equation}
	W(M,h) = W(M,N_r) + \pi r \chi(\partial M),
\end{equation}
where $\lbrace N_r\rbrace$ corresponds to the equidistant surfaces given by the Epstein surfaces of $h$ (in other words, $h=\I^*$). Then $W(M,h)$ is well-defined as a consequence of \eqref{Wr}.

We can finally define the renormalized volume of $M$ as
\begin{equation}
	V_R(M) = W(M,h),
\end{equation}
where $h$ is the metric in the conformal class at infinity that has constant curvature $-1$.

Krasnov-Schlenker \cite{KrasnovSchlenker} derived the variation formula of the $W$-volume in terms of the input at infinity (observe that because of the description by Epstein surfaces, $\I^*$ determines $\II^*$ and $\III^*$) from the volume variation of Rivin-Schlenker \cite{RivinSchlenker}. Since for the rest of the paper we are going to write everything in terms of these limit tensors, let us omit the superscript $^*$.

As we did in the previous section, let us fix $c\in\mathcal{T}(\partial M)$ and some metric $\I_c$ that represents it, so we can parametrize $\mathcal{T}(\partial M)$ by $RQ_c$. Then for $v\in RQ_c$ we have the variation of $V_R$ at $I_c$ (\cite{KrasnovSchlenker} Corollary 6.2, Lemma 8.5)
\begin{equation}
	DV_R(v) = -\frac{1}{4}\int_{\partial M}\langle D\mathrm{I}_c(v),\II_0 \rangle da,
\end{equation}
where the metric between tensors and the area form $da$ are defined from $\I_c$ and $\II_0 = \II - \frac{1}{2}\I$ is the traceless second fundamental form. This 2-form is (at each component of $\partial M$, after taking quotient by the action of $\pi_1(M)$) the negative of the real part of the Schwarzian derivative of the holomorphic map between one component of the region of discontinuity and a disk (\cite{KrasnovSchlenker} Lemma 8.3). In particular (as we stated in (\ref{eq:orthogonalprojection})) $\langle D\I_c(v), \II_0\rangle = \langle v, \II_0\rangle$ pointwise. Then if we take $c$ to be a critical point (i.e.\ $DV_R(v)=0$ at $\I_c$ for every $v\in RQ_c$) $\II_0$ must vanish at every point. This in turn implies that the holomorphic map between a component of the region of discontinuity and a disk has Schwarzian derivative identically zero, which means that the components are disks and the boundary of the convex core is totally geodesic.

\section{On the maximality of the $W$-volume among metrics of constant area}\label{sec:onmaximality}

In Section 7 of \cite{KrasnovSchlenker}, Krasnov and Schlenker study the variation of the $W$-volume among metrics of the same conformal class while keeping the area constant. One way of showing this is by observing that the Ricci flow is a gradient-like flow for the $W$-Volume. We include this argument due to the connection to Ricci flow, since earlier proofs of this fact appeared in \cite{MoroianuGuillarmouRochon} (Proposition 7.1, which includes the cusped case) and previously in \cite{GuillarmouMoroianuSchlenker} (Proposition 3.11, convex co-compact case).

From \cite{KrasnovSchlenker} Corollary 6.2 we know that
\begin{equation}
	\delta W = -\frac{1}{4} \int_{\partial M} \delta H + \langle \delta \I, \II_0\rangle da,
\end{equation}
where we omit the superscript $^*$ and $H$ denotes the mean curvature at infinity.

Now, since we are taking a conformal variation, $\delta \I = u\I$ for some function $u: \partial M \rightarrow \mathbb{R}$. Moreover, since the volume is preserved, $\int_{\partial M} uda = 0$.

Remember that $\II_0$ is the traceless part of the second fundamental form, so $\langle u\I,\II_0\rangle =0 $ pointwise. Also (\cite{KrasnovSchlenker} Remark 5.4) $H=-K$ and hence we have
\begin{equation}
	\delta W = \frac{1}{4} \int_{\partial M} \delta K da.
\end{equation}
But by the Gauss-Bonnet formula $\int_{\partial M} \delta K da + \int_{\partial M}  K \delta(da) = 0$ and the equality $\delta(da) = \frac{1}{2}\langle \delta\I,\I\rangle da = uda$, we can reduce it to
\begin{equation}
	\delta W = -\frac{1}{4}\int_{\partial M}Kuda,
\end{equation}
from where we can recover that $K = const.$ is the unique critical point. If two points had different values of $K$, we can take $u$ supported around those points such that $\int_{\partial M} uda = 0$, but $-\int_{\partial M}Kuda > 0$.

Note that $u= -K + \frac{2\pi\chi(\partial M)}{\text{vol}(\partial M)}$ has integral equal to 0 ($\int_{\partial M}Kda = 2\pi\chi(\partial M)$), and by H\"{o}lder inequality
\begin{equation}
	\left(\int_{\partial M}K^2da\right). \left(\text{vol}(\partial M)\right) \geq \left(\int_{\partial M} Kda\right)^2,
\end{equation}
giving $-\frac{1}{4}\int_{\partial M} Kuda \geq 0$. Hence the $W$-volume is no decreasing under the Ricci flow in two dimensions. It is known (see, for example, \cite{IsenbergMazzeoSesum}) that this flow converges to the metric of constant curvature, proving that this metric is a global maximum. Since $K = const.$ is the only critical point, the $W$-volume increases strictly under the flow, making this metric a strict global maximum.

\section{Local convexity at the geodesic class}\label{sec:localconvexity}

In order to study local behavior around the critical points we want to compute the Hessian of $V_R$ at these points. Let $c$ be a critical point (i.e.\ $\II_0 \equiv 0$) and $\I_c$ a metric representing this class. For $v,w\in RQ_c$, we vary (6) with respect to $w$ to obtain
\begin{align}\label{eq:secondvaritation}\begin{aligned}
	\text{Hess}V_R(v,w) &= -\frac{1}{4}\int_{\partial M}\langle D\mathrm{I}(v),D\II_0(w) \rangle da 
	\\ &+ (\text{terms obtained by varying the other tensors }\text{ w.r.t. w}),
	\end{aligned}
\end{align}
and since $\II_0$ vanishes identically, all the terms in parenthesis get canceled, so
\begin{equation}\label{eq:hessian}
	\text{Hess}V_R(v,w) = -\frac{1}{4}\int_{\partial M}\langle D\mathrm{I}(v),D\II_0(w) \rangle da.
\end{equation}

Let us first understand the quasi-Fuchsian case (here we are referring to hyperbolic structures on the product $S\times \mathbb{R}$, where $S$ is a closed orientable surface of genus $g>1$). In order to differentiate the two ends let us label them (as well as the tensors defined on each one) with $+$ and $-$.

\begin{theorem} Let $M$ be a Fuchsian manifold (i.e.\ the conformal classes at infinty $c_+, c_-$ both equal to say a conformal class $c$). Then the Hessian at $M$ of the renormalized volume is positive definite in the orthogonal complement of the diagonal of $RQ_c \times RQ_c \approx T_c\mathcal{T}(\partial M) = T_{c^+}\mathcal{T}(\partial M^+)\times  T_{c^-}\mathcal{T}(\partial M^-)$  (where we are using the parametrization by real parts of holomorphic quadratic differentials w.r.t. $(c,c)$ and that the real parts of holomorphic quadratic differentials are the same for both orientations). Moreover, the Hessian agrees with the metric induced by $\mathrm{I}$ with a $\frac{1}{8}$ factor.

\end{theorem}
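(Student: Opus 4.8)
The plan is to exploit the product structure of the boundary at infinity. In the Fuchsian case the convex core is totally geodesic (this is the content of the critical point condition $\II_0 \equiv 0$ recalled in Section~\ref{sec:localconvexity}), so $\partial M^+$ and $\partial M^-$ are identified with the same hyperbolic surface $S = \mathbb{H}^2/\Gamma$, and the Epstein construction is symmetric under the isometric involution that swaps the two ends. I would first write the Hessian from~(\ref{eq:hessian}) as a sum over the two components,
\begin{equation}
	\mathrm{Hess}V_R\bigl((v_+,v_-),(w_+,w_-)\bigr) = -\frac14\int_{\partial M^+}\langle D\mathrm{I}(v_+), D\II_0(w_+)\rangle\,da \;-\;\frac14\int_{\partial M^-}\langle D\mathrm{I}(v_-), D\II_0(w_-)\rangle\,da,
\end{equation}
and observe that by~(\ref{eq:orthogonalprojection}) the $D\mathrm{I}$ factor may be replaced by its orthogonal projection, so each integrand becomes $\langle v_\pm, D\II_0(w_\pm)\rangle$. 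The whole problem thus reduces to identifying the operator $w \mapsto D\II_0(w)$ on each component, i.e.\ the linearization at the geodesic class of the traceless second fundamental form at infinity.

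The second step is to compute that linearization. By the Krasnov--Schlenker description recalled after~(6), $\II_0$ at a boundary component is (minus) the real part of the Schwarzian derivative of the uniformizing map, and at the Fuchsian point this Schwarzian vanishes. Its first variation in the direction $w$ is governed by the Ahlfors--Weill / Bers theory: for a quasiconformal deformation with Beltrami coefficient $t\mu$, $\mu = \overline{\phi}/2\rho$, the Schwarzian of the developing map on \emph{each} side varies linearly, and the variation on the two sides has opposite signs because the two ends sit on opposite sides of the totally geodesic surface (the deformation of one end is the reflection of the deformation of the other through $S$). Concretely I expect $D\II_0^\pm(w_\pm) = \pm c_0\, w_\mp'$ for a universal constant, where the cross term appears because a Beltrami differential supported ``upstairs'' changes the conformal structure at both ends; tracking the factors carefully (this is where I would slow down) one finds that the Hessian takes the bilinear form of $\langle v_+ - v_-,\, w_+ - w_-\rangle$ up to a positive constant. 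This immediately shows the Hessian vanishes on the diagonal $v_+ = v_-$ and is positive definite on its orthogonal complement.

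The final step is to pin the constant to $\tfrac18$. Here I would use a known normalization: evaluate the second variation along a one-parameter family through the Fuchsian locus for which the renormalized volume is already computed in the literature (for instance, Krasnov--Schlenker's formula comparing $V_R$ to the Weil--Petersson geometry, or the fact that $V_R$ restricted to quasi-Fuchsian space with the earthquake/bending coordinates has a known second-order Taylor expansion), and match coefficients. Alternatively, compute $D\II_0$ explicitly in a local holomorphic coordinate from Epstein's formula for the metric at infinity, $\II = \mathrm{I} - \mathrm{Re}(\phi)$ schematically, so that $D\II_0(w) = -w$ on each side after subtracting the trace; plugging into the displayed integral gives $\mathrm{Hess}V_R = \tfrac14 \langle v,w\rangle$ on each component before the diagonal projection, and reorganizing in terms of the difference coordinate produces the $\tfrac18$.

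The main obstacle I anticipate is the second step: correctly bookkeeping the signs and the cross-dependence of the two conformal boundary structures on a single interior deformation, so that the quadratic form genuinely comes out as (a multiple of) $\|v_+ - v_-\|^2$ rather than $\|v_+\|^2 + \|v_-\|^2$ — this is precisely what makes the diagonal the null space and is the crux of the ``convexity transverse to the Fuchsian locus'' phenomenon. Once the linearized operator is identified, positive definiteness on the complement of the diagonal and the identification of the constant are essentially formal consequences of~(\ref{eq:orthogonalprojection}) and a single normalization check.
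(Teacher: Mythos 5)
Your reduction step is the same as the paper's and is fine as far as it goes: at the Fuchsian point the Hessian is the sum over the two ends of $-\tfrac14\langle\langle v_\pm, D\II_0^\pm(w)\rangle\rangle$, using (\ref{eq:orthogonalprojection}). One caveat even here: replacing $D\I(v_\pm)$ by $v_\pm$ under the pairing requires knowing that $D\II_0^\pm(w)$ lies in $RQ_c$; the paper gets this from the description $\II_0^+(c_+,c_-)=-\mathrm{Re}\,q_+(c_+,c_-)$ (Bers embedding) together with $D\II_0^\pm(v,v)=0$, the latter holding because deformations along the Fuchsian locus keep $\II_0\equiv 0$.

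The genuine gap is your second step, which you yourself flag as the place to ``slow down'': you never actually identify the linearized operator, and the guesses you record are not consistent. The ansatz $D\II_0^\pm(w_\pm)=\pm c_0\,w_\mp$ (a pure cross term) contradicts $D\II_0^\pm(v,v)=0$, and it does not produce the quadratic form $\langle v_+-v_-,w_+-w_-\rangle$ that you then assert; the correct statement is $D\II_0^+(w_+,w_-)=-\tfrac14\,(w_+-w_-)$, which has both a diagonal and a cross term. Likewise your normalization routes do not pin down $\tfrac18$: the schematic ``$\II=\I-\mathrm{Re}(\phi)$, so $D\II_0(w)=-w$'' gives the wrong constant (off by the factor $4$ that is exactly at stake), and calibrating against a known second-order expansion of $V_R$ near the Fuchsian locus is not carried out --- that expansion is essentially what the theorem establishes. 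The paper closes this gap with a concrete mechanism you do not mention: for almost-Fuchsian manifolds $\III^+$ is conformal to $\I^-$, so at the Fuchsian point $D\III^+(v,0)$ is a function times the metric and hence pointwise orthogonal to $RQ_c$; combining this with the linearized identity $D\III=D\II_0+\tfrac14 D\I$ (from Krasnov--Schlenker, Definition 5.3) and projecting onto $RQ_c$ gives $D\II_0^+(v,0)=-\tfrac14 v$, hence $D\II_0(v,-v)=-\tfrac12(v,-v)$, from which $\mathrm{Hess}\,V_R=\tfrac18\langle\langle\cdot,\cdot\rangle\rangle$ on the orthogonal complement of the diagonal follows. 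Without an argument of this kind (or a genuine Ahlfors--Weill/Bers computation of the derivative of the Bers embedding in the normalization $v=\mathrm{Re}(\phi\,dz^2)$, $\mu=\overline{\phi}/2\rho$), both the positive definiteness transverse to the diagonal and the constant $\tfrac18$ remain unproved.
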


\begin{proof}
Recall by (\ref{eq:hessian}) that since $M$ is a critical point, then for $v=(v_+,v_-), w=(w_+,w_-)$ tangent vectors at $(c,c)$
\begin{equation}
	4\text{Hess} V_R (v,w)= -\int_{\partial_+ M}\langle D\I^+(v), D\II^+_0(w) \rangle da - \int_{\partial_- M}\langle D\I^-(v), D\II^-_0 (w)\rangle da.
\end{equation}

As mentioned in  \cite{KrasnovSchlenker}(Lemma 8.3) we know that $\II^+_0(c_+,c)$ is equal to $-{\rm Re}(q_+(c_+,c))$, where $q_+(c_+,.): \mathcal{T}_- \rightarrow Q_{c_+}$ is the Bers embedding. In particular $D\II^+_0(v,0)$ lands in $RQ_c$, and because $D\II^+_0(v,v)=0$, then the whole image of $D\II_0^+$ lands in $RQ_c$. Since an analogous argument works for $D\II^-_0$, we can then reduce to
\begin{align}\begin{aligned}
	4\text{Hess} V_R (v,w)&= -\int_{\partial_+ M}\langle v_+, D\II^+_0(w) \rangle da - \int_{\partial_- M}\langle v_-, D\II^-_0 (w)\rangle da \\&= -\langle v, D\II_0(w)\rangle_{L^2}, 
	\end{aligned}
\end{align}
where $\langle\cdot,\cdot\rangle$ on forms denotes the $L^2$ scalar product defined by $\I_c$.

Now $D\II_0$ is diagonalizable with orthogonal eigenvectors (is the expression of the Hessian in terms of the metric induced by $\I(c,c)$). We can exploit this fact in the following lemma.

\begin{lem}\label{II0var}
 $D\II_0(v,-v)=-\frac{1}{2}(v,-v)$
\end{lem}

\begin{proof}
 We prove first the following claim.
 
 \textbf{Claim:} Let $M$ be a Fuchsian manifold with associated conformal class c. Then $D\III^+(v,0)$ is orthogonal to $RQ_c$ for all $v\in RQ_c$.
 
Recall from \cite{KrasnovSchlenker} (as stated in the comments of Definition 5.3) that almost-Fuchsian manifolds are quasi-Fuchsian manifolds with the principal curvatures at infinity between $-1, 1$, and for those manifolds $\III$ is conformal to $\mathrm{I}^-$ , so $D\III^+(v,0)$ is $\mathrm{I}(c,c)$ multiplied by some function ($\III$ stays conformal to $\mathrm{I}(c,c)$ when we only varied $c_+$). To see that those tensors are orthogonal to $RQ_c$ (note that for every tensor space we are taking the inner product induced by $I_c$ and integration), recall that if we take conformal coordinates, elements of $RQ_c$ are expressed in terms of $dz^2$ and $d\overline{z}^2$, where the metric is in terms of $dzd\overline{z}$, meaning that $D\III^+(v,0)$ is even pointwise orthogonal to any element of $RQ_c$.

Going back to the proof of the lemma, it follows from \cite{KrasnovSchlenker} (Definition 5.3) that the first variations at $M$ satisfy
\begin{eqnarray}
	\delta\II_0 = \delta\II - \frac{1}{2}\delta\mathrm{I} =  \mathrm{I}(\delta B\cdot,\cdot)\\	
	\delta\III = \frac{1}{4}\delta\mathrm{I} + \mathrm{I}(\delta B\cdot,\cdot),
\end{eqnarray}
hence
\begin{equation}
	D\III = D\II_0 + \frac{1}{4}D\mathrm{I}.
\end{equation}

In particular, by the claim, $D\II_0(v,0) = -\frac{1}{4}v$, and so
\begin{equation}
	D\II^+_0(v,-v) = D\II^+_0(2v,0) + D\II^+_0(-v,-v) = -\frac{1}{4}D\mathrm{I^+}(2v,0) = -\frac{1}{2}v.
\end{equation}
The lemma follows since $D\II_0$ is diagonalizable in the orthogonal complement of the diagonal (the diagonal is part of the $0$-eigenspace).
\end{proof}
This lemma implies that the orthogonal complement is the $-\frac{1}{2}$-eigenspace for $D\II_0$, concluding the last part of the theorem.
\end{proof}

Now we will use this local behavior for quasi-Fuchsian manifolds to conclude our main result for acylindrical manifolds.

\begin{theorem}
Let $M$ be a compact acylindrical 3-manifold with hyperbolic interior such that $\partial M \neq \emptyset$. Then there is a unique critical point $c$ for the renormalized volume of $M$, where $c$ is the unique conformal class at the boundary that makes every component of the region of discontinuity a disk(a.k.a.\ the geodesic class). The Hessian at this critical point is positive definite.
\end{theorem}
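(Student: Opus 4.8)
The plan is to reduce the Hessian at the critical point to the quasi-Fuchsian computation of the previous section by means of the skinning map. By the discussion closing Section~\ref{sec:review}, a conformal class at infinity is a critical point of $V_R$ exactly when $\II_0\equiv 0$, i.e.\ when every component of the region of discontinuity is a disk, which is the geodesic class; for an acylindrical $M$ this class is the unique fixed point of the skinning map $\sigma=\sigma_M$ (up to orientation reversal), and its existence and uniqueness is Thurston's theorem, following from the bounded image theorem together with the fact that $\sigma$ is strictly contracting for the Teichm\"uller metric (McMullen). So it remains to compute the Hessian at this class $c$.

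First I would rewrite $\II_0$ near $c$ as a pullback under $\sigma$. Fix $X\in\mathcal{T}(\partial M)$ near $c$ and a component $S_i$ of $\partial M$. Since $M$ is acylindrical the boundary subgroup $\pi_1(S_i)\le\pi_1(M)$ is quasi-Fuchsian, so the component $\Omega_i$ of the region of discontinuity that covers the $S_i$-piece of the conformal boundary is one side of a Jordan curve; write $\sigma_i(X)\in\mathcal{T}(\overline{S_i})$ for the structure on the other side, so $\sigma(X)=(\sigma_i(X))_i$. Now $\II_0|_{S_i}$ is minus the real part of the Schwarzian of the uniformization of $\Omega_i$, which is a conformal invariant of $\Omega_i\subset\widehat{\mathbb C}$ and does not distinguish the $\pi_1(M)$-action from the $\pi_1(S_i)$-action; hence $\II_0|_{S_i}(X)=-Re\big(\beta_{X_i}(\sigma_i(X))\big)$, where $X_i$ is the $S_i$-component of $X$ and $\beta_{X_i}$ is the Bers embedding. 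The geodesic class is then the fixed point $\sigma(c)=\overline{c}$, at which $\beta_c(\overline{c})=0$.

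Next I would differentiate at $c$. As in the quasi-Fuchsian case, since $\beta_c(\overline{c})=0$ the derivative $D\II_0$ takes values in $RQ_c$, and the computation behind the previous theorem and its lemma gives the two partial derivatives of the Bers embedding at the Fuchsian point in $RQ_c$-coordinates: varying the base point contributes $-\tfrac14\mathrm{Id}$ (the relation $D\II^+_0(v,0)=-\tfrac14 v$) and varying the target contributes $+\tfrac14\mathrm{Id}$ (the relation $D\II^+_0(0,w)=\tfrac14 w$). Applying the chain rule to $X\mapsto\beta_X(\sigma(X))$ gives $D\II_0(w)=-\tfrac14(\mathrm{Id}-D\sigma)(w)$, so by (\ref{eq:hessian}) and (\ref{eq:orthogonalprojection}),
\begin{equation}
 \mathrm{Hess}\,V_R(v,w)=\tfrac{1}{16}\,\langle\langle v,(\mathrm{Id}-D\sigma)(w)\rangle\rangle .
\end{equation}

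Finally, positivity. Since the Hessian is a symmetric bilinear form, $\mathrm{Id}-D\sigma$ --- hence $D\sigma$, regarded as an endomorphism of $RQ_c$ through the orientation-reversing (and so Teichm\"uller-isometric) identification $T_{\overline{c}}\mathcal{T}(\overline{\partial M})\cong T_c\mathcal{T}(\partial M)$ --- is self-adjoint for $\langle\langle\,,\,\rangle\rangle$; thus $D\sigma$ has real eigenvalues $\lambda_j$ with $\langle\langle\,,\,\rangle\rangle$-orthogonal real eigenvectors $v_j$. Applying the strict Teichm\"uller contraction of the acylindrical skinning map to each $v_j$ gives $|\lambda_j|\,\|v_j\|_T=\|D\sigma\,v_j\|_T<\|v_j\|_T$, so $|\lambda_j|<1$; hence all eigenvalues of $\mathrm{Id}-D\sigma$ are positive, $\mathrm{Id}-D\sigma$ is positive definite, and $\mathrm{Hess}\,V_R$ is positive definite. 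I expect the main obstacle to be the factorization step --- checking that the boundary subgroups of an acylindrical manifold are quasi-Fuchsian and that the Schwarzian in $\II_0$ is the one of the associated quasi-Fuchsian cover, so that the universal quasi-Fuchsian derivatives of the previous section apply; the secondary issue, that a Teichm\"uller-metric contraction suffices to control the pairing $\langle\langle\,,\,\rangle\rangle$, is resolved by the self-adjointness of $D\sigma$.
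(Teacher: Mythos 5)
Your proposal is correct and follows essentially the same route as the paper: the critical point is characterized by $\II_0\equiv 0$ (geodesic class, unique by acylindricity), the Hessian is computed by factoring $\II_0$ on each boundary component through the quasi-Fuchsian/Bers-embedding data at $(c_i,\sigma_i(c))$ and applying the Fuchsian lemma to get $\mathrm{Hess}\,V_R(v,w)=\tfrac{1}{16}\langle\langle v,(\mathrm{Id}-d\sigma)(w)\rangle\rangle$, and positivity follows from self-adjointness of $d\sigma$ (symmetry of the Hessian) together with McMullen's strict contraction of the skinning map. This matches the paper's equation (\ref{eq:hessianacyl}) and its concluding eigenvalue argument.
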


\begin{proof}
Since $DV_R(v)= -\frac{1}{4}\langle D\mathrm{I}_c(v),\II_0 \rangle =  -\frac{1}{4}\langle v, \II_0 \rangle$ we have
\begin{equation}
	DV_R=0 \Leftrightarrow \II_0\equiv 0.
\end{equation}
Now, $\II_0\equiv 0$ at each boundary component corresponds to the nullity of the Schwarzian derivative between every component of the domain of discontinuity and a disk. This implies as we stated that every component of the region of discontinuity is a disk, and since our manifold is acylindrical, there is a unique such point in the Teichm\"{u}ller space of the boundary. We can also observe that for this point the boundary of the convex core is completely geodesic.

To prove that it is a strict local minimum, it is sufficient to prove that the Hessian is positive definite at this point. Let $\partial M = S_1 \cup \ldots \cup S_n$, and $c=(c_1,\ldots,c_n)$ denote the geodesic class. To show that the Hessian is positive definite, take parametrization $RQ$ for $S_i$ based at $c_i$ and use the same metric to compute variations of $V_R$ for $M$ and for both ends of quasi-Fuchsian manifolds $S_i\times\mathbb{R}$.

Recall that, since $M$ is hyperbolic acylindrical, the subgroups associated to the components of $\partial M$ are quasifuchsian. Hence we have a map from $\mathcal{T}(\partial M)$ (corresponding to hyperbolic metrics in $M$) to $\mathcal{T}(\partial M) \times \mathcal{T}(\overline{\partial M})$ (corresponding to the quasifuchsian subgroups). The first coordinate of this map is the identity on $\mathcal{T}(\partial M)$, while the second coordinate $\sigma:\mathcal{T}(\partial M) \rightarrow \mathcal{T}(\overline{\partial M})$ is Thurston's \textit{skinning map}.  Observe then that $\mathrm{I}$ at $S_i$ coincides with $\mathrm{I}^+(c_i,\sigma_i(c))$,  where $\sigma_i$ is the image of the skinning map $\sigma$ corresponding to $S_i$.
From the dependence  of $\II$ in terms of $\mathrm{I}$ (Remark 5.9 of \cite{KrasnovSchlenker}) we see that $\II_0$ at $S_i$  coincides with $\II^+_0(c_i,\sigma_i(c))$, and hence $D\I(v), D\II_0(w)$ are equal to $D\I^+(v_i,d\sigma_i(v)), D\II^+_0(w_i,d\sigma_i(w))$ at $S_i$ (note that here $d\sigma$ is written in terms of our charts given by $RQ_c$, so it is essentially the conjugation of the derivative of the skinning map, given our remark on how our charts behave with an orientation change). Hence
\begin{equation}\label{eq:hessianacyl}
	4\text{Hess}V_R(v,w) = -\sum^n_{i=1} \langle v_i, D\II^+_0(w_i,d\sigma_i(w))\rangle 
	= \frac{1}{4}\sum^n_{i=1} \langle v_i, w_i - d\sigma_i(w)  \rangle,
\end{equation}
which is greater than zero for $v\neq 0$ according to the following result:

\begin{theorem} [McMullen \cite{McMullen90}]
Under the conditions above, $\Vert d\sigma\Vert < 1$, where the norm $\Vert d\sigma \Vert$ is calculated in terms of the Teichm\"uller metric.
\end{theorem}

Indeed, (\ref{eq:hessianacyl}) tells us that $d\sigma_c$ is diagonalizable, which together with McMullen's theorem implies that all eigenvalues are less than 1. Then $\text{Hess}V_R$ is also diagonalizable with all eigenvalues positive.

\end{proof}

McMullen's result can be sketched as follows. The skinning map is holomorphic between Teichm\"{u}ller spaces, in particular sending holomorphic disks to holomorphic disks. This implies that $\Vert d\sigma\Vert \leq 1$ with respect to the Kobayashi metric, which coincides with the Teichm\"{u}ller metric. If $\Vert d\sigma\Vert = 1$ at a point, then $\sigma$ would be an isometry on the extremal holomorphic disk, but an earlier result of Thurston states that $\sigma$ is a strict contraction for the acylindrical case.

\begin{cor} Let $c\in \mathcal{T}(\partial M)$ be as in the previous theorem. Then $d\sigma$ admits an orthonormal eigenbasis with respect to the $L^2$-norm on $RQ_c$, with all eigenvalues less than $1$ in absolute value.
\end{cor}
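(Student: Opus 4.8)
The plan is to notice that almost all of the work is already contained in the proof of the preceding theorem; the only new ingredient is to upgrade ``$d\sigma_c$ is diagonalizable'' to ``$d\sigma_c$ is \emph{self-adjoint} with respect to $\langle\langle,\rangle\rangle$,'' and then to combine the resulting real eigenvectors with McMullen's theorem.

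First I would record the bilinear (not merely quadratic) form of (\ref{eq:hessianacyl}). Since $D\II^+_0$ is bilinear and, by the quasi-Fuchsian analysis above, $D\II^+_0(v,0)=-\tfrac14 v$, $D\II^+_0(v,v)=0$ (totally geodesic along the diagonal), hence $D\II^+_0(0,v)=\tfrac14 v$, one gets $D\II^+_0(w_i,d\sigma_i(w)) = \tfrac14\big(d\sigma_i(w)-w_i\big)$; combined with the fact that $D\I(v)$ pairs with anything in $RQ_c$ exactly as $v$ does, this yields
\[
16\,\text{Hess}V_R(v,w) \;=\; \langle\langle v,w\rangle\rangle - \langle\langle v, d\sigma(w)\rangle\rangle
\qquad\text{for all } v,w\in RQ_c .
\]
Because $\text{Hess}V_R$ is symmetric (it is a Hessian) and $\langle\langle,\rangle\rangle$ is symmetric, the form $(v,w)\mapsto\langle\langle v, d\sigma(w)\rangle\rangle$ is symmetric, i.e.\ $d\sigma_c$ is self-adjoint for the $L^2$ inner product on $RQ_c$. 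The spectral theorem on the finite-dimensional real inner product space $(RQ_c,\langle\langle,\rangle\rangle)$ then produces an $L^2$-orthonormal eigenbasis with real eigenvalues.

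For the bound on eigenvalues, let $e\in RQ_c$ be a nonzero eigenvector, $d\sigma_c(e)=\lambda e$ with $\lambda\in\mathbb{R}$. By McMullen's theorem $\Vert d\sigma_c\Vert<1$ in the operator norm for the Teichm\"uller metric, so $|\lambda|\,\Vert e\Vert_{\text{Teich}} = \Vert d\sigma_c(e)\Vert_{\text{Teich}} \le \Vert d\sigma_c\Vert\,\Vert e\Vert_{\text{Teich}} < \Vert e\Vert_{\text{Teich}}$, and since $\Vert e\Vert_{\text{Teich}}>0$ this forces $|\lambda|<1$. The conjugation and orientation-reversal built into our chart description of $d\sigma$ affect neither the Teichm\"uller operator norm nor the absolute values of the eigenvalues, so the appeal to McMullen's theorem is legitimate.

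I do not expect a serious obstacle here: the corollary is essentially a bookkeeping consequence of the theorem's proof. The one point requiring care is verifying that the bilinear identity above genuinely holds off the diagonal (not just as a quadratic form), which amounts to propagating the identification $RQ_c=RQ_{\bar c}$ consistently through $d\sigma$ and through the quasi-Fuchsian formulas for $D\II^+_0$; once that is in hand, self-adjointness of $d\sigma_c$ with respect to $\langle\langle,\rangle\rangle$, and hence the corollary, is immediate.
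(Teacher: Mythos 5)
Your proposal is correct and follows essentially the same route as the paper: the paper likewise obtains the orthonormal eigenbasis by reading off self-adjointness of $d\sigma$ with respect to the $L^2$ pairing from the symmetry of the Hessian in (\ref{eq:hessianacyl}), handles the orientation/conjugation issue via the chart remark, and bounds the eigenvalues by McMullen's theorem $\Vert d\sigma\Vert<1$. Your explicit bilinear computation $16\,\mathrm{Hess}V_R(v,w)=\langle\langle v,w\rangle\rangle-\langle\langle v,d\sigma(w)\rangle\rangle$ simply spells out the step the paper states tersely as ``(\ref{eq:hessianacyl}) tells us that $d\sigma_c$ is diagonalizable.''
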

Observe that if we take holomorphic quadratic differentials for the tangent space, we have the conclusion for $d\sigma$ after taking a complex conjugation.

This result was somehow expected thanks to a parallel between the skinning map and the Thurston map for postcritically finite rational maps, since the Thurston map is contracting for non-Latt\`e maps. Moreover, if $f$ is a postcritically finite quadratic polynomial, there is a uniform spectral gap for the derivative at its unique critical point (more precisely, all eigenvalues are greater than $1/8$ in norm. For this refer to \cite{BuffEpsteinKoch}). It is unknown to the author if there is an analog for hyperbolic 3-manifolds, namely a family of manifolds for which there is some uniform espectral gap.

It is worth mentioning  that it is still open that the geodesic class is the absolute minimum of $V_R$ (for acylindrical manifolds). It is also open that $V_R$ is strictly positive for quasi-Fuchsian manifolds outside the Fuchsian locus, although this is known for almost-Fuchsian manifolds \cite{CiobotaruMoroianu}.

\section{Remarks for hyperbolic 3-manifolds with rank-1 cusps}\label{sec:remarks}

Consider a quasi-Fuchsian manifold $M$ with rank-1 cusps and a compact subset $K$ of $\partial M^+$ (the top boundary component as we denoted in (\ref{sec:localconvexity})). For a quasi-Fuchsian manifold sufficiently close to the Fuchsian locus, the equidistant foliation over $K$ extends up to the 0-leaf and has principal curvatures between $-1$ and $1$. As stated in the comments of \cite{KrasnovSchlenker} Definition 5.3, this foliation extends to the other end of $M$ and $\III^+$ is a metric conformal to $I^-$ (all of this over $K$). Hence, at a Fuchsian manifold, $D\III^+(v,0)$ is a multiple of $I^+$ at any such compact $K$, and hence over all of $\partial M^+$.

On the other hand, all the formulas of \cite{KrasnovSchlenker} Theorem 5.8 apply for the rank 1-cusps case, so in particular we also have that $\II_0$ is (at each component of $\partial M$ after taking quotient by the action of $\pi_1(M)$) the negative of the real part of the Schwarzian derivative of the holomorphic map between one component of the region of discontinuity and a disk.

As we mentioned, the variation formula of the renormalized volume was proved in \cite{MoroianuGuillarmouRochon}. The second variation at a critical point is well defined since for the quasi-Fuchsian case it is the inner product of quadratic holomorphic differentials, which are in $L^2$ with respect to the metric. The other terms that get canceled with $\II_0$ in (\ref{eq:secondvaritation}) do not overcome the exponential decay of the metric since each of them have at most polynomial growth. The general case will then be well defined since it is also an inner product of quadratic holomorphic differentials, thanks to the skinning map argument.

Finally, McMullen's result still applies for surfaces with punctures, so the proof of our theorem extends to the case of geometrically finite hyperbolic acylindrical manifolds with rank 1 cusps. We summarize this as follows.

\begin{theorem} Let $M$ be a Fuchsian manifold (i.e.\ the conformal classes at infinty $c_+, c_-$ coincide, say to a conformal class $c$) with possibly rank 1-cusps.Then the Hessian at $M$ of the renormalized volume is positive definite in the orthogonal complement of the diagonal of $RQ_c \times RQ_c \approx T_c\mathcal{T}(\partial M) = T_{c^+}\mathcal{T}(\partial M^+)\times  T_{c^-}\mathcal{T}(\partial M^-)$  (where we are using the parametrization by real parts of holomorphic quadratic differentials w.r.t. $(c,c)$ and that the real parts of holomorphic quadratic differentials are the same for both orientations). Moreover, the Hessian agrees with the metric induced by $\mathrm{I}$ with a $\frac{1}{8}$ factor.

\end{theorem}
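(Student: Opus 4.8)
The plan is to rerun the proof of Theorem~1 essentially word for word, the only genuinely new point being to check that the integrals defining the first and second variation still converge once rank-1 cusps are present. First I would invoke \cite{MoroianuGuillarmouRochon} to the effect that the first variation formula $DV_R(v)=-\frac14\int_{\partial M}\langle D\mathrm{I}_c(v),\II_0\rangle\,da$ continues to hold, and observe that the Fuchsian manifold $M$ is a critical point: as recalled in Section~\ref{sec:remarks}, $\II_0$ is still the negative of the real part of the Schwarzian derivative of the uniformizing map of each component of the domain of discontinuity, and for a Fuchsian group those components are round disks, so $\II_0\equiv0$. Differentiating the variation formula in a direction $w$ and using $\II_0\equiv0$ to cancel every term coming from varying the remaining tensors leaves
\begin{equation*}
	4\text{Hess}V_R(v,w)=-\int_{\partial_+M}\langle D\mathrm{I}^+(v),D\II^+_0(w)\rangle\,da-\int_{\partial_-M}\langle D\mathrm{I}^-(v),D\II^-_0(w)\rangle\,da,
\end{equation*}
exactly as in the closed case.

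The point requiring care is the finiteness of these integrals. By Section~\ref{sec:remarks}, $\II^+_0$ is again the negative of the real part of a Bers embedding, so $D\II^+_0(v,0)$ is the real part of a holomorphic quadratic differential, while $D\mathrm{I}^+(v,0)$ has $RQ_c$-component $v$; hence the dominant part of the integrand is the pointwise pairing of two integrable holomorphic quadratic differentials against the hyperbolic metric. Such differentials have at most simple poles at the punctures and are therefore in $L^2$ with respect to the hyperbolic metric near a cusp, so the pairing is integrable, and the remaining contributions to $D\mathrm{I}^+$ grow at most polynomially near the cusps and are absorbed by the exponential decay of $da$. Once convergence is established, every remaining identity is pointwise, or holds on each component after quotienting by $\pi_1(M)$, and was proved in \cite{KrasnovSchlenker} (Theorem~5.8, Definition~5.3) in a form valid for rank-1 cusps. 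So $D\II^+_0(v,0)$ lands in $RQ_c$, the two integrals collapse to $-\langle\langle v,D\II_0(w)\rangle\rangle$ for the hyperbolic $L^2$-inner product, and $D\III=D\II_0+\frac14 D\mathrm{I}$. The Claim inside Lemma~1 also carries over, as noted in Section~\ref{sec:remarks}: over any compact $K\subseteq\partial M^+$ the equidistant foliation near the Fuchsian locus reaches the $0$-leaf with principal curvatures in $(-1,1)$, so $\III^+$ is conformal to $I^-$ over $K$, $D\III^+(v,0)$ is a function times $I^+$ on $K$, and exhausting $\partial M^+$ by such $K$ makes it a function times $I^+$ everywhere, hence pointwise orthogonal to $dz^2,d\overline{z}^2$ and so to $RQ_c$. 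As in Lemma~1 this yields $D\II^+_0(v,0)=-\frac14 v$ and then $D\II_0(v,-v)=-\frac12(v,-v)$.

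Finally, $D\II_0$ is self-adjoint for $\langle\langle\,\cdot\,,\,\cdot\,\rangle\rangle$, hence diagonalizable with orthogonal eigenspaces; the diagonal of $RQ_c\times RQ_c$ sits in its $0$-eigenspace, and the computation above identifies the orthogonal complement of the diagonal with the $-\frac12$-eigenspace. Thus for $v,w$ in that complement $4\text{Hess}V_R(v,w)=-\langle\langle v,D\II_0(w)\rangle\rangle=\frac12\langle\langle v,w\rangle\rangle$, so $\text{Hess}V_R$ is $\frac18$ times the metric induced by $\mathrm{I}$ and in particular positive definite there. I expect the only real obstacle to be the convergence bookkeeping of the second paragraph; modulo that, the argument is the proof of Theorem~1 verbatim.
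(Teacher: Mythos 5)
Your proposal is correct and follows essentially the same route as the paper: it reruns the Fuchsian computation of Theorem~1, with the genuinely new ingredients being exactly those the paper supplies in Section~\ref{sec:remarks} --- the variation formula from \cite{MoroianuGuillarmouRochon}, convergence of the Hessian integrals via $L^2$ quadratic differentials and polynomial-versus-exponential decay near the cusps, and the compact-exhaustion argument showing $D\III^+(v,0)$ is conformal to $\mathrm{I}^+$. Nothing essential is missing.
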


\begin{theorem}
Let $M$ be a geometrically finite manifold with rank 1-cusps. Then there is a unique critical point $c$ for the renormalized volume of $M$, where $c$ is the unique conformal class at the boundary that makes every component of the region of discontinuity a disk(a.k.a.\ the geodesic class). The Hessian at this critical point is positive definite.
\end{theorem}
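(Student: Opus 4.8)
The plan is to run the proof of the acylindrical convex co-compact case almost verbatim, replacing each of its three inputs by the rank-1 cusped counterpart discussed in this section: the first variation formula of \cite{MoroianuGuillarmouRochon}, the rank-1 cusped Fuchsian Hessian computation proved above, and McMullen's contraction theorem for punctured surfaces.

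First, I would dispatch existence, characterization and uniqueness of the critical point. Since by \cite{MoroianuGuillarmouRochon} the identity $DV_R(v) = -\tfrac14\langle\langle D\I_c(v),\II_0\rangle\rangle = -\tfrac14\langle\langle v,\II_0\rangle\rangle$ (using (\ref{eq:orthogonalprojection})) persists in the cusped setting, and since $\II_0$ is still, on each boundary component, the negative of the real part of the Schwarzian derivative of the uniformizing map of the corresponding component of the domain of discontinuity, one gets $DV_R\equiv 0 \iff \II_0\equiv 0 \iff$ every component of the domain of discontinuity is a round disk. Acylindricity pins this geodesic class down as the unique such point in $\mathcal{T}(\partial M)$, and the boundary of the convex core is totally geodesic there, exactly as before.

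Second, for positive-definiteness of the Hessian, I would feed the rank-1 cusped Fuchsian theorem into the skinning-map argument. Writing $\partial M = S_1\cup\dots\cup S_k$ and $\sigma = (\sigma_1,\dots,\sigma_k)$ for the skinning map, the identifications $\I|_{S_i} = \I^+(c_i,\sigma_i(c))$ and $\II_0|_{S_i} = \II_0^+(c_i,\sigma_i(c))$ reduce (\ref{eq:hessian}) to $4\,\text{Hess}\,V_R(v,v) = -\sum_i \langle\langle v_i, D\II_0^+(v_i, d\sigma_i(v))\rangle\rangle$. The cusped Fuchsian relations $D\II_0^+(v_i,0) = -\tfrac14 v_i$ and $D\II_0^+(w,w) = 0$ give $D\II_0^+(v_i, d\sigma_i(v)) = -\tfrac14(v_i - d\sigma_i(v))$, whence
\[
	4\,\text{Hess}\,V_R(v,v) = \tfrac14\sum_{i=1}^k \langle\langle v_i,\, v_i - d\sigma_i(v)\rangle\rangle = \tfrac14\big(\|v\|^2 - \langle\langle v, d\sigma(v)\rangle\rangle\big)
\]
in the $L^2$ data of $\I_c$ (with the usual caveat that $d\sigma$ here is the conjugate of the differential of the skinning map, owing to the orientation reversal of the charts). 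Symmetry of the Hessian makes $d\sigma$ self-adjoint for $\langle\langle\,,\rangle\rangle$, hence real-diagonalizable, and McMullen's theorem — which holds for surfaces with punctures — forces its spectrum into $(-1,1)$; therefore $\text{Hess}\,V_R$ is positive definite.

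The step I expect to require genuine care, and the only place the cusped case differs from the compact one, is checking that the second variation is well-defined, i.e. that all the integrals over $\partial M$ converge near the cusps. Concretely, the terms discarded from (\ref{eq:secondvaritation}) because $\II_0\equiv 0$ must be shown to contribute nothing: each has at most polynomial growth into a cusp while the area form $da$ of $\I_c$ decays exponentially, so they vanish; and the surviving pairing must be recognized as finite by checking, via the Bers-embedding description of $\II_0^+$, that $D\II_0$ again lands in $RQ_c$, so the pairing is the $L^2$ inner product of integrable holomorphic quadratic differentials, finiteness being precisely the condition $\iint|\phi|\,dx\,dy < \infty$ from Section \ref{sec:Localparam}. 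Once this convergence bookkeeping is in place — and one notes that both the skinning-map identification and McMullen's theorem carry over unchanged to the geometrically finite category — the compact-case argument goes through word for word.
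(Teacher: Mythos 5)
Your proposal is correct and follows essentially the same route as the paper: the first variation formula of \cite{MoroianuGuillarmouRochon} plus the Schwarzian characterization of $\II_0$ for uniqueness of the geodesic class, the cusped Fuchsian Hessian computation fed through the skinning map with McMullen's contraction theorem for punctured surfaces, and the same convergence bookkeeping (polynomial growth of the discarded terms against exponential decay of the area form, and finiteness of the surviving pairing as an $L^2$ inner product of integrable holomorphic quadratic differentials). No substantive difference from the paper's argument.
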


We can also extend the Ricci flow argument for the maximality of the $W$-volume. We use the existence and convergence properties of the Ricci flow from the conditions stated in \cite{JiMazzeoSesum}. In the language of \cite{MoroianuGuillarmouRochon}, let a cusp be parametrized as $(v,w)\in \left[ 0,\frac{1}{R}\right[ \times\mathbb{R}\slash \frac{1}{2}\mathbb{Z}$, which under the notation of \cite{JiMazzeoSesum} corresponds to $(s,w)$,  where $v= e^{-s}$.

We need to show that $\psi\in C^{\infty}_r(\overline{\partial M}) \Rightarrow e^{2\phi}\in R + s^{-\mu}C^{2,\alpha}$ (where the first term describes the family of conformal factors for which the renormalized volume is defined in \cite{MoroianuGuillarmouRochon}, and the second term the sufficient conditions for running Ricci flow in the lines of \cite{JiMazzeoSesum}) for some constants $\mu>0, R;$ where $v= e^{-s}$, $\psi(v,w)= \phi(s,w)$. $C^{\infty}_r(\overline{\partial M})$ denotes (\cite{MoroianuGuillarmouRochon} 2.7) functions whose $w$-derivative vanishes with infinite order at $v=0$. $C^{2,\alpha}$ denotes the usual H\"{o}lder space with respect to the hyperbolic metric on the cusp.

Since $\psi\in C^{\infty}_r(\M)$, $\lim_{v\rightarrow 0}\psi(v,w)$ exists and does not depend on $w$, which we denote by $R$. Then we need to show that $a(s,w)= e^{2\phi(s,w)}-R$ belongs to $s^{-\mu}C^{2,\alpha}$. We will show that belongs to $s^{-\mu}C^3$ for $0<\mu<1$.

First observe that $b(v,w) = a(s,w)$ extends to $v=0$ in $C^\infty$ because $\psi$ does. In particular,
\begin{equation}
	b_v(v,w) = 2\psi_v(v,w)e^{2\psi(v,w)}
\end{equation}
also extends to $v=0$, and $b_v(0,w)$ does not depend on $w$. There is a constant $K$ such that $\vert b(v,w)\vert\leq Kv^\mu$, so $\vert a(s,w)\vert\leq Ke^{-s\mu} \leq K's^{-\mu}$, which gives $a \in s^{-\mu}C^0$.

Now $a_s(s,w) = 2\phi_s(s,w)e^{2\phi(s,w)}$, and we need to show that $\vert a_s(s,w)\vert \leq Ks^{-\mu}$. This follows from
\begin{equation}
	2\phi_s(s,w)e^{2\phi(s,w)} = -2e^{-s}\psi_v(v,w)e^{2\psi(v,w)}
\end{equation}
and the fact that $\psi_v(v,w)e^{2\psi(v,w)}$ extends to $v=0$ not depending on $w$.

Now the pattern appears for higher $y$-derivatives of $a$. When we express the derivatives in $v$ coordinates, the expressions gain a $v=e^{-s}$, so the decay follows since $\psi$ extends $C^\infty$ to $v=0$ without $w$ dependence. 

For the case of taking a $w$-derivative, its norm in $C^1$ is $e^{2s}a_w$ (the metric to define $C^{k,\alpha}$ is $ds^2 + e^{-2s}dw^2$). We have
\begin{equation}
	e^{2s}2\phi_w(s,w)e^{2\phi(s,w)} = v^{-2}\psi_w(v,w)e^{2\psi(v,w)},
\end{equation}
which is bounded by some power of $v$ because $\psi_w$ has infinite order $0$ at $v=0$. Similarly, we extend our argument to further $w$-derivatives. This shows that we can run Ricci flow with initial condition any metric considered by \cite{MoroianuGuillarmouRochon}.

Finally, to observe that the logarithm of the conformal factors along the Ricci flow still belong to $C^{\infty}_r(\M)$, note that the curvature $e^{-2\psi}(-1+\Delta\psi)$ is in $C^{\infty}_r(\M)$. This makes the flow to preserve the set of metrics with such conformal factors, which in turn allows the argument to extend to this generality.

\section{Corrected renormalized volume}\label{sec:corrected}

Let $M$ be a closed compact hyperbolic $3$-manifold with an oriented incompressible surface $\Sigma$ ($g>2$) which divides $M$ into two components $M_1, M_2$. We can consider now hyperbolic structures $N_1, N_2$ on the interiors of $M_1, M_2$ such that we have the inclusions $M_1\subset N_1, M_2\subset N_2$ by taking coverings with respect to $\pi_1(M_1),\pi_1(M_2)$, respectively. The fact that they glue along $\Sigma$ tells us that the conformal classes at infinity of the open manifolds are each others skinning maps.

Now, for $r$ sufficiently large, $M_1$ lives inside $N_1^r$, the $r$-leaf of the foliation defined by the renormalized volume, and hence
\begin{equation}
	\text{vol}(M_1) = V_R(M_1,c_1) + r\pi\chi(\Sigma) - \text{vol}(({\rm int} N_1^r)\setminus M_1) + \frac{1}{4} \int_{N_1^r} Hda,
\end{equation}
where $c_1$ is the conformal class at infinity for ${\rm int}(M_1)$.

We can take $r$ even larger so that we have a similar statement for $M_2$. Observe that the regions $({\rm int}N_1^r)\setminus M_1, ({\rm int}N_2^r)\setminus M_2$ glue along $\Sigma$ and live inside the quasi-Fuchsian manifold obtained by gluing $N_1\setminus M_1, N_2\setminus M_2$ along $\Sigma$. In particular the quasi-Fuchsian manifold $(N_1\setminus M_1)\cup(N_2\setminus M_2)$ has $(c_1,c_2)$ as a conformal class at infinity and $N_1^r, N_2^r$ as the $r$-leaves for the renormalized volume, from where we conclude
\begin{align}\begin{aligned}
	\text{vol}(({\rm int}N_1^r)\setminus M_1 \cup ({\rm int}N_2^r)\setminus M_2) &= V_R(\Sigma\times\left[ 0,1\right], c_1,c_2)  + \frac{1}{4} \int_{N_1^r} Hda
	\\ &+ \frac{1}{4} \int_{N_2^r} Hda + 2r\pi\chi(\Sigma).
	\end{aligned}
\end{align}
This gives the following proposition.

\begin{prop} For $M$ as above, ${\rm vol}(M) = V_R(M_1,c_1) + V_R(M_2,c_2) - V_R(\Sigma\times\left[ 0,1\right], c_1,c_2)$
\end{prop}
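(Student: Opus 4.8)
The plan is to assemble the identity by combining the two displayed equations that immediately precede the proposition, using the additivity of volume over the pieces $M_1$, $M_2$, $(\operatorname{int}N_1)\setminus M_1$, $(\operatorname{int}N_2)\setminus M_2$, and then cancelling the auxiliary terms — the boundary integrals $\frac14\int_{N_1}Hda$, $\frac14\int_{N_2}Hda$ and the linear-in-$r$ terms $r\pi\chi(\Sigma)$. First I would fix $r$ large enough that $M_1$ sits inside the $r$-leaf $N_1$ of the equidistant foliation defining $V_R(M_1,c_1)$ and simultaneously $M_2$ sits inside $N_2$; existence of such an $r$ follows because the foliations exhaust the respective ends, and only finitely many constraints are imposed. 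Then I would write down the decomposition
\begin{equation}
\operatorname{vol}(M) = \operatorname{vol}(M_1) + \operatorname{vol}(M_2),
\end{equation}
and substitute the first displayed formula (for $\operatorname{vol}(M_1)$) together with its analogue for $\operatorname{vol}(M_2)$.

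Next I would substitute the second displayed identity, which expresses $\operatorname{vol}\big((\operatorname{int}N_1)\setminus M_1 \ \cup\ (\operatorname{int}N_2)\setminus M_2\big)$ in terms of $V_R(\Sigma\times[0,1],c_1,c_2)$ plus exactly the two boundary integrals $\frac14\int_{N_1}Hda + \frac14\int_{N_2}Hda$ and $2r\pi\chi(\Sigma)$. The point is that these are precisely the terms that appear with the opposite sign when $-\operatorname{vol}((\operatorname{int}N_1)\setminus M_1) + \frac14\int_{N_1}Hda$ and $-\operatorname{vol}((\operatorname{int}N_2)\setminus M_2) + \frac14\int_{N_2}Hda$ and $r\pi\chi(\Sigma)+r\pi\chi(\Sigma)$ are collected from the two pieces. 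After cancellation one is left with
\begin{equation}
\operatorname{vol}(M) = V_R(M_1,c_1) + V_R(M_2,c_2) - V_R(\Sigma\times[0,1],c_1,c_2),
\end{equation}
which is the claim. I would also note the orientation bookkeeping: $\Sigma$ inherits opposite orientations as a boundary component of $M_1$ versus $M_2$, so that the quasi-Fuchsian manifold glued from the two exteriors has conformal data $(c_1,c_2)$ consistent with the skinning-map relation stated in the setup.

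The step that needs the most care — really the only non-formal point — is the claim that $(\operatorname{int}N_1)\setminus M_1$ and $(\operatorname{int}N_2)\setminus M_2$ glue along $\Sigma$ to form exactly the region between the $r$-leaves $N_1,N_2$ inside the glued quasi-Fuchsian manifold $\Sigma\times[0,1]$ with conformal classes $(c_1,c_2)$. This requires that the equidistant foliations on the two ends, which are defined intrinsically by Epstein surfaces attached to the conformal boundary data, agree with the equidistant foliation of the glued quasi-Fuchsian manifold near its two ends; since the Epstein construction is local near the boundary at infinity and depends only on the conformal representative, the leaves $N_i$ computed in $\operatorname{int}(M_i)$ coincide with the leaves computed in the quasi-Fuchsian manifold once $r$ is large enough that these leaves lie in the region where the two hyperbolic structures are isometric (namely the neighborhood of $\Sigma$ coming from the covering construction). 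Granting this identification, the proposition is a one-line consequence of the two preceding displays.
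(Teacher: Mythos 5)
Your proposal is correct and follows essentially the same route as the paper: add the volume identities for $M_1$ and $M_2$, substitute the gluing identity for $(\operatorname{int}N_1)\setminus M_1 \cup (\operatorname{int}N_2)\setminus M_2$ inside the quasi-Fuchsian manifold with boundary data $(c_1,c_2)$, and cancel the $\frac{1}{4}\int_{N_i}H\,da$ and $r\pi\chi(\Sigma)$ terms. Your extra remark justifying that the Epstein $r$-leaves computed in $\operatorname{int}(M_i)$ coincide with those of the glued quasi-Fuchsian manifold is exactly the identification the paper asserts (more tersely) before stating the proposition.
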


In order to use this proposition to find a lower bound for the volume of $M$ (and since $c_1, c_2$ are related by the skinning map), we want to show that the minimum of the corrected renormalized volume (defined as follows) is attained at the geodesic class.

\begin{defi}
Let $M$ be a compact acylindrical $3$-manifold with hyperbolic interior, connected boundary of genus $g>1$, and let $c\in\mathcal{T}(\partial M)$ the element that defines the conformal boundary at infinity. Then the corrected renormalized volume is defined as
\begin{equation}
	\overline{V}_R(M) = V_R(M,c) - \frac{1}{2}V_R(\partial M \times \left[0,1 \right], (c,\sigma(c)) ).
\end{equation}
\end{defi}

Observe that proposition (1) implies that, under cutting, the volume of $M$ is equal to the sum of the corrected renormalized volumes of its parts. It is straightforward to extend this to the case when $\Sigma$ has multiple components, and if $M$ is open without cusps, we can replace $\text{vol}(M)$ by $\overline{V}_R(M)$ in Proposition (1). Then if we consider the corrected renormalized volume to be an extension of the volume for closed hyperbolic manifolds, we obtain as a corollary of proposition 1:

\begin{cor} $\overline{V}_R$ is additive under cutting.
\end{cor}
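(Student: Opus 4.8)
The plan is to derive the corollary directly from Proposition 1 together with the definition of $\overline{V}_R$ and one symmetry property of the renormalized volume of the quasi-Fuchsian collar $\Sigma\times[0,1]$. It suffices to treat the basic case: $M$ a closed hyperbolic $3$-manifold cut along a connected incompressible surface $\Sigma$ ($g>2$) into two pieces $M_1,M_2$, with $\overline{V}_R(M):=\text{vol}(M)$. The general statement — several pieces, $\Sigma$ with several components, or intermediate pieces that are open rather than closed — then follows by iteration, using the remark that $\overline{V}_R(M)$ replaces $\text{vol}(M)$ in Proposition 1 when $M$ is open without cusps, and by extending $\overline{V}_R$ additively over boundary components in the multi-component case. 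Write $c_i$ for the conformal class at infinity of $\mathrm{int}(M_i)$. Since the two pieces glue along $\Sigma$, the conformal classes at infinity are each other's skinning images: $\sigma_1(c_1)=c_2$ and $\sigma_2(c_2)=c_1$, where $\sigma_i$ denotes the skinning map of $M_i$, which is defined because $\partial M_i=\Sigma$ is incompressible.

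First I would apply Proposition 1 to get $\overline{V}_R(M)=\text{vol}(M)=V_R(M_1,c_1)+V_R(M_2,c_2)-V_R(\Sigma\times[0,1],(c_1,c_2))$. Then I would expand the definition of the corrected renormalized volume on each piece, substituting $\sigma_1(c_1)=c_2$ and $\sigma_2(c_2)=c_1$, to obtain $\overline{V}_R(M_1)=V_R(M_1,c_1)-\frac{1}{2}V_R(\Sigma\times[0,1],(c_1,c_2))$ and $\overline{V}_R(M_2)=V_R(M_2,c_2)-\frac{1}{2}V_R(\Sigma\times[0,1],(c_2,c_1))$. Adding these two identities, everything will reduce to the expression in Proposition 1 provided one knows that $V_R(\Sigma\times[0,1],(c_1,c_2))=V_R(\Sigma\times[0,1],(c_2,c_1))$.

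This last symmetry is the only real point, and I would argue it as follows. The quasi-Fuchsian manifold with conformal data $(c_2,c_1)$ is obtained from the one with data $(c_1,c_2)$ by the orientation-reversing diffeomorphism of $\Sigma\times[0,1]$ that exchanges the two boundary components; this operation conjugates the conformal structures at infinity, and the conjugation is in turn undone by composing with the orientation reversal of $\mathbb{H}^3$. Since the renormalized volume is an invariant of the hyperbolic manifold that is insensitive to orientation — exactly the phenomenon, used throughout this paper, that $RQ$ (and hence all of the limiting data at infinity entering $V_R$) is unchanged when the boundary surface has its orientation reversed — the two values coincide. With this in hand, adding the two displayed identities gives $\overline{V}_R(M_1)+\overline{V}_R(M_2)=V_R(M_1,c_1)+V_R(M_2,c_2)-V_R(\Sigma\times[0,1],(c_1,c_2))=\text{vol}(M)=\overline{V}_R(M)$, which is the claim; iterating yields the general statement.

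The main obstacle I anticipate is purely a matter of conventions rather than substance: one must make sure that the quasi-Fuchsian term $V_R(\Sigma\times[0,1],(c_1,\sigma_1(c_1)))$ occurring in $\overline{V}_R(M_1)$ is literally the same quasi-Fuchsian renormalized volume as the one produced by gluing the exteriors in the proof of Proposition 1 — in particular that the orientation with which $\Sigma$ is seen from the $M_1$ side and from the $M_2$ side is tracked correctly. The orientation-independence of $RQ$ from Section \ref{sec:Localparam} and the symmetry property above are precisely what reconcile these two viewpoints; once they are in place the remaining verification is bookkeeping. (One also tacitly assumes the pieces $M_i$ lie in the class for which $\overline{V}_R$ is defined — incompressible, and connected boundary or its evident multi-component extension.)
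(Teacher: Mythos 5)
Your proposal is correct and follows essentially the same route as the paper, which simply observes that Proposition 1 implies the result after expanding the definition of $\overline{V}_R$ on each piece and using $\sigma_1(c_1)=c_2$, $\sigma_2(c_2)=c_1$. The symmetry $V_R(\Sigma\times[0,1],(c_1,c_2))=V_R(\Sigma\times[0,1],(c_2,c_1))$ that you isolate as the one real point is left implicit in the paper (it is the same quasi-Fuchsian manifold with the two ends relabeled, and the orientation-independence of $RQ$ from Section \ref{sec:Localparam} handles the conventions), so your write-up is a correct and slightly more explicit version of the intended argument.
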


Now, from the first variation of $V_R$ at $\I=\I^+$, we have
\begin{align}\begin{aligned}
	8 D\overline{V}_R(v) =  - \int_{\partial M^+}\langle D\mathrm{I}^+(v,d\sigma(v)),\II_0^+ (c,\sigma(c))\rangle da^+ 
	+ \int_{\partial M^-}\langle D\mathrm{I}^-(v,d\sigma(v)),\II_0^- (c,\sigma(c)) \rangle da^-,
	\end{aligned}
\end{align}
where $da^+, da^-$ are the volume forms for $\I^+, \I^-$, respectively.

Observe that if we take $c$ to be the geodesic class, $\II_0^+, \II_0^-$ are both zero, and hence $c$ is also a critical point for the corrected renormalized volume. Moreover, the Hessian is expressed as
\begin{align}
	8\text{Hess}\overline{V}_R (v,w) = - \langle D\mathrm{I}^+(v,d\sigma(v)), D\II_0^+ (w,\sigma(w))\rangle \rangle + \langle D\mathrm{I}^-(v,d\sigma(v)), D\II_0^- (w,\sigma(w)) \rangle,
\end{align}
which by Lemma \ref{II0var} is equal to
\begin{equation}
	8\text{Hess}\overline{V}_R (v,w) =  - \langle v, -\frac{1}{4}(w- d\sigma(w))\rangle   +  \langle d\sigma(v), \frac{1}{4}(w- d\sigma(w))\rangle. 
\end{equation}
Then
\begin{equation}
	32\text{Hess}\overline{V}_R (v,w) =  \langle  v+d\sigma(v) , w-d\sigma(w) \rangle,
\end{equation}
and since all eigenvalues of $d\sigma$ are between $-1$ and $1$, we obtain the following result.

\begin{theorem}
Let $M$ be a compact acylindrical $3$-manifold with hyperbolic interior, $\partial M \neq \emptyset$ without cusps, and $c\in\mathcal{T}(\partial M)$ be the geodesic class. Then $c$ is a local minimum for the corrected renormalized volume of $M$.
\end{theorem}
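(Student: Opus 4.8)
The plan is to read the statement off directly from the Hessian computation carried out in the two displays immediately preceding it, together with the diagonalization of the skinning differential. First I would note that at the geodesic class $c$ one has $\II_0^+(c,\sigma(c)) = \II_0^-(c,\sigma(c)) = 0$, since these traceless second fundamental forms at infinity are the negatives of the real parts of the Schwarzian derivatives between the components of the domain of discontinuity and a disk, and at the geodesic class those components are disks; the first-variation formula for $\overline{V}_R$ displayed above then gives $D\overline{V}_R = 0$ at $c$. So $c$ is a critical point, and it suffices to prove that $\text{Hess}\,\overline{V}_R$ is positive definite there.

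For this I would invoke the identity
\begin{equation}
32\,\text{Hess}\,\overline{V}_R(v,v) = \langle\langle v + d\sigma(v),\, v - d\sigma(v)\rangle\rangle
\end{equation}
established above, by feeding Lemma 1.1 (the value of $D\II_0^\pm$ at the Fuchsian locus of $\partial M\times[0,1]$) into the Hessian formula for $\overline{V}_R$; here $\langle\langle\cdot,\cdot\rangle\rangle$ denotes the $L^2$ pairing on $RQ_c$ induced by $I_c$ and $d\sigma$ the derivative of the skinning map in the $RQ_c$-charts. By the Corollary to McMullen's theorem, at the geodesic class $d\sigma$ admits an $L^2$-orthonormal eigenbasis $\{e_j\}$ of $RQ_c$ with real eigenvalues $\lambda_j$ satisfying $|\lambda_j| < 1$, the strict inequality being exactly where acylindricity of $M$ is used (through Thurston's strict contraction). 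Writing a nonzero $v = \sum_j c_j e_j$, the identity gives
\begin{equation}
32\,\text{Hess}\,\overline{V}_R(v,v) = \sum_j c_j^2\,(1+\lambda_j)(1-\lambda_j) = \sum_j c_j^2\,(1-\lambda_j^2) > 0 ,
\end{equation}
since each factor $1-\lambda_j^2$ is strictly positive and some $c_j\neq 0$. Hence $\text{Hess}\,\overline{V}_R$ is positive definite at $c$, so $c$ is a strict local minimum of $\overline{V}_R$, which is the assertion.

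For completeness I would make explicit the two standing facts the argument relies on: that $\overline{V}_R$ is twice differentiable near the geodesic class --- which holds because the second variations of $V_R(M,\cdot)$ and of $V_R(\partial M\times[0,1],\cdot)$ at the geodesic class exist, being up to constants $L^2$ inner products of holomorphic quadratic differentials (cf.\ Section~\ref{sec:review} and Section~\ref{sec:remarks}), and because the skinning map is smooth --- and the precise value of $D\II_0^\pm$ at the Fuchsian locus, which is Lemma 1.1.

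The genuinely nontrivial ingredient, and the step I would flag as the crux, is the strict bound $|\lambda_j| < 1$ on the eigenvalues of $d\sigma$: this is where the hypothesis that $M$ is acylindrical is essential, and it is why the conclusion is a strict local minimum rather than merely a degenerate critical point. For a general (non-acylindrical) end some eigenvalue $\lambda_j$ could equal $\pm 1$, in which case the corresponding factor $1-\lambda_j^2$ vanishes and the Hessian degenerates; the remaining computations are routine bookkeeping with the formulas already derived.
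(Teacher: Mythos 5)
Your proposal is correct and follows essentially the same route as the paper: criticality at the geodesic class from the vanishing of $\II_0^{\pm}$, the identity $32\,\mathrm{Hess}\,\overline{V}_R(v,v)=\langle\langle v+d\sigma(v),\,v-d\sigma(v)\rangle\rangle$ obtained by feeding Lemma 1.1 into the Hessian formula, and positivity via the $L^2$-orthonormal eigenbasis of $d\sigma$ with eigenvalues of absolute value strictly less than $1$ (McMullen/Thurston, using acylindricity). The eigenbasis expansion you write out is exactly the paper's concluding remark made explicit, so there is nothing to add.
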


\bibliographystyle{amsalpha}
\bibliography{mybib}

\end{document}